\documentclass[11pt]{amsart}

\usepackage[T1]{fontenc}
\usepackage[utf8]{inputenc}
\usepackage{lmodern}
\usepackage{microtype}
\usepackage{amsmath,amssymb,mathtools}
\usepackage{amsthm}
\usepackage{enumitem}
\usepackage{booktabs}
\usepackage{needspace}
\usepackage{tikz}
\usetikzlibrary{matrix}
\usetikzlibrary{positioning}
\usepackage[colorlinks=true,linkcolor=blue!55!black,
  citecolor=green!35!black,urlcolor=blue!60!black]{hyperref}
\setlist[enumerate]{leftmargin=*,itemsep=2pt,topsep=4pt}
\setlist[itemize]{leftmargin=*,itemsep=2pt,topsep=4pt}

\newtheorem{theorem}{Theorem}[section]
\newtheorem{proposition}[theorem]{Proposition}
\newtheorem{lemma}[theorem]{Lemma}
\newtheorem{corollary}[theorem]{Corollary}
\newtheorem{conjecture}[theorem]{Conjecture}
\newcommand{\repeatedtheoremname}{}
\newtheorem*{repeatedtheoreminner}{\repeatedtheoremname}
\newenvironment{repeatedtheorem}[1]
  {\renewcommand{\repeatedtheoremname}{Theorem~\ref*{#1}}%
   \begin{repeatedtheoreminner}}
  {\end{repeatedtheoreminner}}
\theoremstyle{definition}
\newtheorem{definition}[theorem]{Definition}
\newtheorem{example}[theorem]{Example}
\newtheorem{question}[theorem]{Question}
\theoremstyle{remark}
\newtheorem{remark}[theorem]{Remark}

\newcommand{\catw}{\operatorname{cat}_{w}}
\newcommand{\cats}{\operatorname{cat}_{s}}
\newcommand{\catLS}{\operatorname{cat}}
\newcommand{\scat}{\operatorname{scat}}
\newcommand{\sd}{\operatorname{sd}}
\newcommand{\OO}{\mathcal O}
\newcommand{\whU}{\widehat U}
\newcommand{\whF}{\widehat F}
\newcommand{\sq}{\mathbin{\sqcup}}

\title[Lusternik-Schnirelmann category of finite spaces]
{Weak, stable, and ordinary Lusternik-Schnirelmann category of finite spaces}

\author[D. Mosquera-Lois]{David Mosquera-Lois}
\address{Department of Mathematics, University of Santiago de Compostela, Spain}
\email{david.mosquera.lois@usc.gal}
\author[K. Tanaka]{Kohei Tanaka}
\address{Academic Assembly, School of Humanities and Social Sciences,
Institute of Social Sciences, Shinshu University,
3-1-1, Matsumoto, Nagano 390-8621, Japan}
\email{tanaka@shinshu-u.ac.jp}

\subjclass[2020]{Primary 55M30; Secondary 06A07, 55P15, 55U10}
\keywords{finite topological space, Lusternik--Schnirelmann category,
stable category, weak category, finite poset, barycentric subdivision}
\thanks{The first author has been supported by ED431C 2023/31 (Xunta de Galicia).}

\newcommand{\MainTheoremStatement}{%
Let $P$ be a weakly contractible, noncontractible finite $T_0$-space with $P^{\sharp}$ points.
Suppose that $P\setminus\{g\}$ is nonempty and contractible for some $g\in P$.
For every integer $m\geq2$, let $D_m$ be the discrete $m$-point space and set
$P_m=P\oplus D_m$.  Then $P_m$ is connected, has $P^{\sharp}+m$ points, and satisfies
\[
 \bigl(\catw(P_m),\cats(P_m),\catLS(P_m)\bigr)=(1,2,m).
\]
Moreover, $\catLS(\sd^kP_m)=2$ for every $k\geq1$.
}

\newcommand{\RealizationTheoremStatement}{%
The set $\mathfrak R$ contains the following triples:
\begin{enumerate}
 \item every $(a,b,c)$ with $1\leq a<b\leq2a$ and $c\geq b$;
 \item every $(a,a,c)$ with $2\leq a\leq c$;
 \item the triple $(1,1,1)$.
\end{enumerate}
Moreover, a triple of the form $(1,1,c)$ is realizable only when $c=1$.
}

\begin{document}

\begin{abstract}
The weak, stable, and ordinary Lusternik--Schnirelmann categories of a finite
$T_0$-space satisfy $\catw(X)\leq\cats(X)\leq\catLS(X)$.
We give a general construction answering the simultaneous-strictness question
of C\'ardenas, Flores, Quintero, and Villar-Li\~n\'an.
If $P$ is weakly contractible but noncontractible and deleting one point
makes $P$ contractible, then adjoining $m\geq2$ incomparable maxima gives a
connected space with category triple $(1,2,m)$, whose ordinary category is
two after every positive number of subdivisions.  Applying the construction
to a nine-point space gives examples on $m+9$ points and simultaneous
strictness on twelve points.  By additivity under disjoint unions, we also
realize every triple $(a,b,c)$ with $1\leq a<b\leq2a$ and $c\geq b$, and every
triple $(a,a,c)$ with $2\leq a\leq c$.  We conclude with questions on connected
realization and the minimum size of a strict example.
\end{abstract}

\maketitle

\section{Introduction}\label{sec:introduction}

The Lusternik--Schnirelmann category of a topological space $X$, denoted by
$\catLS(X)$, is the least number of open subsets covering $X$ whose
inclusions into $X$ are nullhomotopic; it is infinite if no finite such
cover exists.  Thus category measures the number of pieces needed to cover
a space when each piece can be contracted within the ambient space.
It is a homotopy invariant with a classical role in the study of critical
points and in algebraic topology.  We refer to the monograph of Cornea,
Lupton, Oprea, and Tanr\'e \cite{CLOT} for the general theory.  Throughout
this paper we use the unreduced convention, in which a nonempty
contractible space has category one.

Combinatorial approaches to this invariant make it possible to study
categorical covers through finite structures.  Fern\'andez-Ternero,
Mac\'ias-Virg\'os, and Vilches \cite{FernandezMaciasVilches2015} introduced
a simplicial LS-category using covers by subcomplexes and the contiguity
relation between simplicial maps.  Their construction is an invariant of
strong homotopy type and provides comparisons with the ordinary category
of finite topological spaces through order complexes and face posets.
The theory was developed further by Fern\'andez-Ternero, Mac\'ias-Virg\'os,
Minuz, and Vilches \cite{FernandezMaciasMinuzVilches2019}, who studied
simplicial category and its relation to the category of geometric
realizations.  These works provide a framework for asking which features
of LS-category are retained when one passes between a finite combinatorial
model and its associated polyhedron.

Finite $T_0$-spaces are particularly well suited to this question.  They
can be identified with finite posets, with continuous maps corresponding
to order-preserving maps.  To such a space $X$ one associates its order
complex $\OO(X)$, whose simplices are the nonempty chains of the poset.
McCord's theorem gives a weak homotopy equivalence between $X$ and the
polyhedron $|\OO(X)|$ \cite{McCord1966}.  Nevertheless, $X$ and
$|\OO(X)|$ need not have the same homotopy type.  For example, a finite
space can be noncontractible while its order complex has contractible
realization.  Consequently, the passage to a polyhedron can lose
information relevant to categorical covers of the finite space.

C\'ardenas, Flores, Quintero, and Villar-Li\~n\'an
\cite{CardenasFloresQuinteroVillar2025} studied covering invariants of
finite spaces by combining homotopy-theoretic methods with graph and
hypergraph techniques.  Building on the simplicial comparisons, they
introduced the weak and stable LS-categories
\[
 \catw(X)=\catLS(|\OO(X)|),\qquad
 \cats(X)=\min_{k\geq0}\catLS(\sd^kX),
\]
where $\sd X$ is the face poset of $\OO(X)$.  The minimum in the second
definition is the eventual value of the nonincreasing sequence of
categories of the iterated subdivisions.  These invariants satisfy
\begin{equation}\label{eq:basic-chain}
 \catw(X)\leq\cats(X)\leq\catLS(X);
\end{equation}
see \cite[Definition~2.1 and equation~(2.1)]{CardenasFloresQuinteroVillar2025}.

The two comparisons in \eqref{eq:basic-chain} express different aspects of
the relationship between finite spaces and polyhedra.  A strict inequality
$\cats(X)<\catLS(X)$ means that subdivision allows more efficient
categorical covers than the original finite topology.  On the other hand,
$\catw(X)<\cats(X)$ means that no number of finite subdivisions suffices
to attain the category of the associated polyhedron.  Studying both gaps
therefore asks how much categorical information subdivision can remove,
and how much can persist even after arbitrarily many subdivisions.

The examples recalled in
\cite[Remark~2.3]{CardenasFloresQuinteroVillar2025} exhibit each gap
separately.  The height-one spaces with Hasse graph $K_{2,n}$ have category
triple $(2,2,n)$, while a weakly contractible noncontractible example has
triple $(1,2,2)$.  The same authors ask whether there is a finite $T_0$-space
on which both inequalities in \eqref{eq:basic-chain} are strict
\cite[p.~209]{CardenasFloresQuinteroVillar2025}.

There is a useful distinction between the disconnected and connected
versions of this problem.  The three categories are additive under
disjoint unions, as we recall in Proposition~\ref{prop:additivity}.
One can therefore combine examples exhibiting the separate gaps by placing
them in different components.  A connected example requires both phenomena
to occur within the same component.  It is then natural to ask a stronger
question: can the weak and stable categories be kept fixed while the
ordinary category grows without bound among connected spaces?

We answer this question by a construction that applies to a class of
weakly contractible noncontractible spaces.  The additional hypothesis is
that deleting a single point leaves a contractible finite space.  It is this
deletion property that produces an efficient categorical cover after
subdivision.

\begin{theorem}\label{thm:main}
\MainTheoremStatement
\end{theorem}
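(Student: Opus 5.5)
Throughout, I read $P\oplus D_m$ as the ordinal sum: the $m$ new points $d_1,\dots,d_m$ are pairwise incomparable and lie above every point of $P$. Connectivity and the point count $P^{\sharp}+m$ are then immediate. The plan has three parts: compute $\catw(P_m)$, show $\catLS(\sd^kP_m)=2$ for all $k\geq1$ (which gives $\cats(P_m)=2$), and show $\catLS(P_m)=m$.

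\textbf{The weak category.} The order complex satisfies $\OO(P_m)=\OO(P)*\OO(D_m)$, a simplicial join. Since $|\OO(P)|$ is contractible by weak contractibility, the join $|\OO(P)|*\{m\text{ points}\}$ is contractible. Hence $\catw(P_m)=1$.

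\textbf{The upper bound after subdivision.} Let $k\geq1$ and take the point $g$ of the hypothesis. In $\sd P_m$ (chains ordered by inclusion, open sets being down-sets) I use two open sets.
\begin{itemize}
\item $A=\sd(P_m\setminus\{g\})$, the chains not containing $g$. It is a down-set. Also $P_m\setminus\{g\}=(P\setminus\{g\})\oplus D_m$, and the ordinal sum of a contractible finite space with anything is contractible. So $A$ is contractible, because subdivision preserves contractibility of finite spaces (Barmak).
\item $B$, the chains $c$ such that $c\cup\{g\}$ is again a chain. It is a down-set and contains every chain through $g$. The maps $c\mapsto c\cup\{g\}\geq c$ and $c\cup\{g\}\geq\{g\}$ give a two-step homotopy from $\mathrm{id}_B$ to a constant, so $B$ is contractible.
\end{itemize}
Thus $A\cup B=\sd P_m$ and $\catLS(\sd P_m)\leq2$. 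For $k\geq2$ I subdivide further, pulling the cover back along the order-preserving map $\sd^kP_m\to\sd P_m$ (last-vertex map). Alternatively, I take $\sd^{k-1}A$ and $\sd^{k-1}B$, which are open, cover, and are contractible since subdivision preserves contractibility. So $\catLS(\sd^kP_m)\leq2$.

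\textbf{The lower bound after subdivision.} I need $\sd^kP_m$ to be noncontractible for every $k$. First replace $P$ by its core $P_0$; this is harmless because a strong deformation retraction of $P$ onto $P_0$ extends over $P_m$ by fixing $D_m$. Then I check that $P_0\oplus D_m$ is minimal:
\begin{itemize}
\item a maximal point of $P_0$ has the $m\geq2$ covers $d_i$, so it is not an up beat point;
\item a point $d_i$ is not a down beat point, since $U_{d_i}\setminus\{d_i\}=P_0$ has no maximum (it is noncontractible);
\item any other beat point of $P_0\oplus D_m$ would already be a beat point of $P_0$.
\end{itemize}
So $P_m$ is noncontractible. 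To pass to all $\sd^k$, I would invoke the known fact that $X$ and $\sd X$ are simultaneously contractible, or at least that $\sd$ of a noncontractible minimal space is noncontractible. This gives $\catLS(\sd^kP_m)=2$ and $\cats(P_m)=2$.

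\textbf{The ordinary category.} The upper bound is easy. Each $U_{d_i}=P\cup\{d_i\}$ has maximum $d_i$, so it is contractible, and these $m$ basic open sets cover $P_m$; hence $\catLS(P_m)\leq m$. For the lower bound, any open set containing $d_i$ contains $U_{d_i}$. Hence it suffices to show that no open $U\supseteq P\cup\{d_i,d_j\}$ with $i\neq j$ has nullhomotopic inclusion into $P_m$. Working in the minimal model $P_0\oplus D_m$, I would analyse a fence $\iota=f_0\leq f_1\geq f_2\leq\cdots$ of maps $U\to P_m$ ending at a constant.
\begin{itemize}
\item A map $f\geq\iota$ must fix each $d_i$.
\item A map $f\leq\iota$ with $f(d_i)<d_i$ would send all of $P_0\subseteq U_{d_i}$ into $P_0$ below a single point. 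Combining this with minimality, in the spirit of Barmak's lemma that a map homotopic to the identity of a minimal space equals the identity, I would show inductively that every $f_t$ restricts to the identity on $P_0\cup\{d_i,d_j\}$.
\end{itemize}
This contradicts the fence ending at a constant. This last step, controlling homotopies of a non-surjective inclusion in a minimal space, is the main obstacle. If the direct fence argument stalls, I would instead use the ``two maxima force a non-contractible image'' argument: the restriction of any $f\simeq\iota$ to $P_0\cup\{d_i,d_j\}$ still has noncontractible image containing two incomparable maximal points.
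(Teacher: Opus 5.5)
Your weak-category argument and your proof that $\catLS(\sd^kP_m)=2$ for $k\geq1$ are correct. The open sets $A$ and $B$ are exactly the face posets of the paper's subcomplexes $\OO(P_m\setminus\{g\})$ and $\operatorname{st}(g)$. So you use the paper's cover, checked directly in $\sd P_m$ instead of through $\scat$ and \eqref{eq:comparison-scat}.

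\textbf{The gap: the lower bound $\catLS(P_m)\geq m$ is not proved.} You call it the main obstacle and do not carry out the fence argument. Any proof must use that the open set contains two \emph{distinct} new maxima, since $U_{d_i}$ by itself is categorical. Your sketch never says where this enters.

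In the step $f\geq\iota$, knowing that $f$ fixes the $d$'s is not enough: you must rule out $f$ pushing points of $P_0$ up into $D_m$. This is exactly where $i\neq j$ is needed. For $p\in P_0$ one has $f(p)\leq f(d_i)=d_i$ and $f(p)\leq f(d_j)=d_j$, so $f(p)\in P_0$. Then $f|_{P_0}\geq\mathrm{id}$ forces $f|_{P_0}=\mathrm{id}$ by minimality of $P_0$.

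In the step $f\leq\iota$, $f(P_0)\subseteq P_0$ holds automatically, so $f|_{P_0}=\mathrm{id}$. Then $f(d_i)$ is an upper bound of $P_0$, hence equals $d_i$, because $P_0$ has no maximum.

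You also need to justify working in $P_0\oplus D_m$. Compose the nullhomotopy with the extended core retraction $P_m\to P_0\oplus D_m$, which fixes $V=P_0\cup\{d_i,d_j\}$. With these details every map in the fence equals $\iota|_V$, which contradicts the fence ending at a constant. Your fallback about a ``noncontractible image'' only restates what has to be shown.

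\textbf{The missing idea: the paper's retraction.} This makes the fence analysis unnecessary. If $U$ is open and contains two new maxima, then $U=P\cup J$ with $J=U\cap D_m$ and $J^{\sharp}\geq2$. Fix $t\in J$. The map $r\colon P_m\to P\cup J$ that fixes $P\cup J$ and sends $D_m\setminus J$ to $t$ is an order-preserving retraction. A nullhomotopy of the inclusion $\iota$ therefore gives one of $r\circ\iota=\mathrm{id}_{P\cup J}$, so $P\oplus J$ would be contractible.

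Your own minimality check for $P_0\oplus D_m$ used only $m\geq2$. It therefore applies verbatim to $P_0\oplus J$ and shows that $P\oplus J$ is noncontractible; this is Lemma~\ref{lem:Xm-noncontractible}. That gives $\catLS(P_m)=m$ in two lines and completes the theorem.
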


The hypotheses hold for a nine-point space $P(9)$ from the classification of
Cianci and Ottina \cite{CianciOttina2020}.  Thus
$X_m=P(9)\oplus D_m$ has $m+9$ points and category triple $(1,2,m)$.
In particular,
\[
 1=\catw(X_3)<\cats(X_3)=2<\catLS(X_3)=3,
\]
so simultaneous strictness occurs on a connected twelve-point space.
More generally, ordinary category is unbounded among connected weakly
contractible spaces of stable category two.  For $m>2$, the ordinary
category drops from $m$ to two at the first subdivision; the remaining gap
between stable and weak category persists through every further subdivision.

The three parts of the argument use distinct features of the base $P$.
Its contractible order complex controls weak category.  The deletion
$P\setminus\{g\}$ and the closed star of $g$ give a cover by two strongly
collapsible subcomplexes after adjoining the maxima.  Finally, the
noncontractibility of $P$, together with a retraction argument, prevents a
categorical open set from containing two new maxima.  This last argument
computes the ordinary category without any weak-contractibility assumption.

We call a point \emph{essential} when its deletion leaves a nonempty
contractible finite space.  If it is also a weak point in the sense of
Barmak and Minian \cite{BarmakMinian2008}, then the base is automatically
weakly contractible.  Their Wallet example (see Example \ref{ex:wallet}) already exhibits this combination
of properties.  The terminology isolates the hypothesis used here;
Section~\ref{sec:core} applies the criterion to $P(9)$, its opposite poset,
six ten-point bases, and the Wallet.  A seventh ten-point base yields the
same category triple by a separate simplicial argument in
Example~\ref{ex:ten-point-bases}.  The nine-point examples have the smallest
possible base size under the hypotheses of Theorem~\ref{thm:main}.

Once such a family is available, the existence question leads to a
realization problem for triples.  Let
\[
 \mathfrak R=\left\{(\catw(X),\cats(X),\catLS(X)):
 X\text{ is a nonempty finite }T_0\text{-space}\right\}.
\]
The inequalities \eqref{eq:basic-chain} give necessary numerical
restrictions, but do not by themselves describe $\mathfrak R$.
Our connected family supplies building blocks which, together with
additivity and the known height-one examples, yield explicit
representatives throughout the following regions.

\begin{theorem}\label{thm:realization}
\RealizationTheoremStatement
\end{theorem}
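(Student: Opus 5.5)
The plan is to build every listed triple from a few connected building blocks, glued by disjoint union. Proposition~\ref{prop:additivity} says all three categories add under $\sqcup$. The blocks are:
\begin{itemize}
\item the one-point space, with triple $(1,1,1)$;
\item the height-one spaces with Hasse graph $K_{2,n}$ ($n\geq2$), with triple $(2,2,n)$ by \cite[Remark~2.3]{CardenasFloresQuinteroVillar2025};
\item the spaces $P_m$ of Theorem~\ref{thm:main}, with triple $(1,2,m)$ for $m\geq2$, using the base $P(9)$ so that they exist.
\end{itemize}
Item (3) is the one-point space itself.

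For item (2), take $(a,a,c)$ with $2\leq a\leq c$. I would write it as a sum of $(2,2,n_i)$ and $(1,1,1)$ blocks. If $a$ is even, use $a/2$ copies of $K_{2,n_i}$ with $\sum n_i=c$ and every $n_i\geq2$; this is possible because $c\geq a$. If $a$ is odd with $a\geq3$, use $(a-1)/2$ copies of $K_{2,n_i}$ together with one point, choosing $\sum n_i=c-1\geq a-1$.

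For item (1), take $1\leq a<b\leq2a$ and $c\geq b$, and set $j=b-a$, so $1\leq j\leq a$. I would use $j$ copies of $P_{m_i}$, contributing $(j,2j,\sum m_i)$, plus $a-j$ copies of the point, contributing $(a-j,a-j,a-j)$. The total triple is then
\[
\bigl(a,\;a+j,\;\textstyle\sum m_i+a-j\bigr)=\bigl(a,\;b,\;\textstyle\sum m_i+a-j\bigr).
\]
We need $\sum m_i=c-a+j$ with each $m_i\geq2$. This requires $c-a+j\geq2j$, that is, $c\geq a+j=b$, which is exactly the hypothesis.

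For the final clause, suppose $\catw(X)=\cats(X)=1$. Then $X$ is nonempty, and unreduced category one forces $|\OO(X)|$ to be contractible, hence connected, so $X$ is connected. Also $\cats(X)=1$ means $\catLS(\sd^kX)=1$ for some $k$, so $\sd^kX$ is contractible as a finite space. The key input is the known fact that $\sd X$ contractible implies $X$ contractible (Barmak--Minian: a finite space is contractible if and only if its subdivision is). Iterating this gives $X$ contractible, so $\catLS(X)=1$ and $c=1$.

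I expect this last step to be the main obstacle. It depends on citing that theorem correctly, namely that contractibility of $\sd X$ implies contractibility of $X$ (via the characterization in terms of collapsibility and cores), and on whether the paper establishes that statement earlier or only cites it.
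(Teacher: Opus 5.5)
Your proposal is correct and follows the paper's proof. Both use additivity (Proposition~\ref{prop:additivity}) applied to isolated points, the height-one spaces $B_n$, and $X_m=P(9)\oplus D_m$, and both deduce the final clause from the equivalence \eqref{eq:contractibility-detection}. The paper records that equivalence in Section~\ref{sec:preliminaries} as a cited fact, so the step you flagged as the main obstacle is already available.

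The remaining differences are cosmetic. In region (1) the paper takes one $X_{c-b+2}$ and $b-a-1$ copies of $X_2$. In region (2) it uses the single block $B_{c-a+2}$ together with $a-2$ isolated points, which avoids your parity split.
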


The representatives in the first region have $c+9(b-a)$ points and are
generally disconnected.  The boundary $b\leq2a$ records the reach of the
additive construction; we do not know whether it is a universal restriction.
Connected realization beyond the displayed families is also open.  The
small size of $X_3$ raises a separate question of efficiency, and we
conjecture that twelve points is the minimum for simultaneous strictness.

For related approaches via small categories, relation matrices, and sectional
category, see Tanaka \cite{Tanaka2018,Tanaka2020,Tanaka2024}.

Section~\ref{sec:preliminaries} recalls the necessary comparisons.
Section~\ref{sec:family} proves the general construction in
Theorem~\ref{thm:main}, and Section~\ref{sec:core} gives the small bases
and the twelve-point example.  Section~\ref{sec:realization} establishes
Theorem~\ref{thm:realization}, and Section~\ref{sec:questions} discusses
further questions.

\section{Finite spaces and three categories}\label{sec:preliminaries}

Finite spaces are assumed nonempty and $T_0$ unless an empty summand is
explicitly allowed; all simplicial complexes are finite.
References for classical LS-category and finite-space homotopy theory include
\cite{CLOT} and \cite{Barmak2011,McCord1966,Stong1966}; the stable and
simplicial comparison results used here are developed in
\cite{FernandezMaciasVilches2015,FernandezMaciasMinuzVilches2019}.

Every finite $T_0$-space is identified with its associated poset (see \cite{Barmak2011});
continuous maps are precisely the order-preserving maps.  We use
the convention
\[
  U_x=\{y:y\leq x\},\qquad F_x=\{y:y\geq x\}.
\]
Thus open subsets are lower sets.  We put
the strict lower set $\whU_x=U_x\setminus\{x\}$ and
the strict upper set $\whF_x=F_x\setminus\{x\}$.  The order complex $\OO(X)$ has the points of
$X$ as vertices and the nonempty chains of $X$ as simplices.

A point $x$ is an \emph{up beat point} if $\whF_x$ has a minimum and a
\emph{down beat point} if $\whU_x$ has a maximum.  Deleting a beat point is a
strong deformation retract.  A finite $T_0$-space is contractible if and only
if successive beat-point deletions reduce it to one point
\cite{Stong1966}.  Successively deleting beat points until none remain
produces a \emph{core}.  A finite space without beat points is also called
\emph{minimal}; this does not assert minimum cardinality within a weak
homotopy type.

A point $g\in P$ is a \emph{weak point} if $\whU_g$ or $\whF_g$ is
contractible.  Deleting a weak point induces a weak homotopy equivalence
\cite[Definition~3.2 and Proposition~3.3]{BarmakMinian2008}. We introduce two new key notions.

\begin{definition}\label{def:essential}
A point $g$ of a finite $T_0$-space $P$ is \emph{essential} if
$P\setminus\{g\}$ is nonempty and contractible.  It is an
\emph{essential weak point} if it is both essential and a weak point.
\end{definition}

Equivalently, deleting an essential point leaves a space that reduces to
one point by beat-point deletions.  The weak-point condition concerns a
strict lower or upper set, whereas essentiality concerns the whole
complement.  If $g$ is an essential weak point, then $P$ is weakly
contractible.  Essentiality alone does not imply this: in the four-point
poset with two minima below two maxima, deleting any point leaves a
contractible space, but the order complex is a circle.

\begin{definition}
An open subset $U\subseteq X$ is \emph{categorical in $X$} if the inclusion
$U\hookrightarrow X$ is nullhomotopic.  The ordinary category $\catLS(X)$ is
the least number of categorical open subsets that cover $X$.
\end{definition}

The barycentric subdivision $\sd X$ is the face poset of $\OO(X)$, and
$\sd^kX$ denotes its $k$-fold iterate.  The sequence
$\catLS(\sd^kX)$ is nonincreasing and therefore eventually constant.  We set
\[
  \cats(X)=\min_{k\geq0}\catLS(\sd^kX),
  \qquad
  \catw(X)=\catLS\bigl(|\OO(X)|\bigr).
\]
The comparison \eqref{eq:basic-chain} follows from the finite-space and
simplicial comparison results of
\cite{FernandezMaciasVilches2015,FernandezMaciasMinuzVilches2019}.

We also use simplicial category in the unreduced convention.  Two simplicial
maps $f,g\colon L\to K$ are contiguous if $f(\sigma)\cup g(\sigma)$ is a
simplex of $K$ for every simplex $\sigma$ of $L$.  A subcomplex $L\subseteq K$
is categorical if its inclusion lies in the contiguity class of a constant
map, where contiguity class means equivalence under a finite sequence of
contiguous maps.  The simplicial category $\scat(K)$ is the least number of
categorical subcomplexes covering $K$.  Thus our values are one greater than
the reduced values used in \cite{FernandezMaciasVilches2015}.

A vertex $v$ of $K$ is dominated by $w\ne v$ if every maximal simplex
containing $v$ also contains $w$.  A sequence of dominated-vertex deletions
is a strong collapse; $K$ is strongly collapsible if such a sequence reduces
it to a vertex.  A strongly collapsible subcomplex is categorical in any
ambient complex, and every simplicial cone is strongly collapsible.  We write
$\operatorname{st}_K(v)$ for the closed star of a vertex $v$ in $K$.
The finite-space comparison results give
\begin{equation}\label{eq:comparison-scat}
 \catLS(\sd X)\leq\scat(\OO(X))\leq\catLS(X)
\end{equation}
and
\begin{equation}\label{eq:contractibility-detection}
 \begin{aligned}
 X\text{ contractible}
 &\ \Longleftrightarrow\ \OO(X)\text{ strongly collapsible}\\
 &\ \Longleftrightarrow\ \sd X\text{ contractible};
 \end{aligned}
\end{equation}
see \cite[Section~6]{FernandezMaciasVilches2015}.  In particular, no iterated
subdivision of a noncontractible finite space is contractible.

If $P$ and $Q$ are posets, their ordinal sum $P\oplus Q$ is the disjoint
union in which every point of $P$ is declared smaller than every point of
$Q$.  Order complexes turn ordinal sums into simplicial joins:
\begin{equation}\label{eq:ordinal-join}
  \OO(P\oplus Q)=\OO(P)*\OO(Q).
\end{equation}

\section{A general construction}\label{sec:family}

Let $P$ be a nonempty finite $T_0$-space and let
$D_m=\{t_1,\dots,t_m\}$ be the discrete $m$-point poset, where $m\geq2$.
Write
\begin{equation}\label{eq:Pm}
 P_m=P\oplus D_m.
\end{equation}
The $t_i$ are incomparable maxima above every point of $P$.  The space
$P_m$ is connected, even when $P$ is disconnected, and has $P^{\sharp}+m$ points.
We first compute ordinary category, then identify separate conditions
controlling the weak and stable categories.

\begin{lemma}\label{lem:Xm-noncontractible}
If $P$ is noncontractible, then $P\oplus J$ is noncontractible for every
finite discrete space $J$ with $J^{\sharp} \geq2$.  If, in addition, $P$ has no
beat points, then neither does $P\oplus J$.
\end{lemma}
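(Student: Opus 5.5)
The plan is to use Stong's characterization: a finite $T_0$-space is contractible if and only if successive beat-point deletions reduce it to a point, equivalently, its core is a single point. The core is unique up to homeomorphism. I would first prove the second assertion, then derive the first from it.

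\emph{No beat points.} Assume $P$ has no beat points and $J^{\sharp}\geq 2$. I check each point of $P\oplus J$.
\begin{itemize}
\item A point $t\in J$ is maximal, so $\whF_t=\emptyset$ has no minimum and $t$ is not an up beat point. Its strict lower set is $\whU_t=P$. If $P$ had a maximum $p$, then any other point of $P$ would lie in $U_p$, and either $P=\{p\}$ or $\whU_p$ would have a maximum, or $p$ would fail to be a beat point in some other way. Since this is awkward, I argue directly instead: a finite poset with a maximum is contractible, and it is a cone. A poset with a maximum $p$ and at least two points has a beat point, namely any maximal element of $P\setminus\{p\}$, because its strict upper set is $\{p\}$. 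So, $P$ being beat-point-free, either $P$ has no maximum or $P$ is a single point. The single-point case is excluded, since $P$ is noncontractible in the main statement. (In the second assertion alone, $P$ beat-point-free and noncontractible also excludes it.) Hence $\whU_t=P$ has no maximum, and $t$ is not a down beat point.
\item For $x\in P$, the lower set $\whU_x$ is unchanged from $P$, so it still has no maximum. The upper set is $\whF_x=\whF_x^P\sqcup J$. If $\whF_x^P$ is nonempty, a minimum would have to be a minimum of $\whF_x^P$, which does not exist. If $\whF_x^P$ is empty, the upper set is $J$, which has no minimum because $J^{\sharp}\geq2$ and $J$ is discrete.
\end{itemize}
So $P\oplus J$ has no beat points.

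\emph{Noncontractibility in general.} Let $P$ be any noncontractible space and let $P_0$ be a core of $P$, obtained by deleting beat points $x_1,\dots,x_r$ in turn. Since $P$ is noncontractible, $P_0$ is beat-point-free and has at least two points. The main step is to show that each deletion $x_i$ remains a beat point in the corresponding intermediate space $Q_i\oplus J$.
\begin{itemize}
\item A down beat point keeps the same strict lower set, which is unchanged in the ordinal sum.
\item For an up beat point, $\whF_{x_i}^{Q_i}$ is nonempty with a minimum $y$. In $Q_i\oplus J$ the strict upper set is $\whF_{x_i}^{Q_i}\sqcup J$, whose minimum is still $y$, because every element of $J$ lies above $y$.
\end{itemize}
Therefore $P\oplus J$ reduces by beat-point deletions to $P_0\oplus J$. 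By the first part, $P_0\oplus J$ is beat-point-free, and it has more than one point, so it is a core. Since cores are unique up to homeomorphism, the core of $P\oplus J$ is not a single point, and $P\oplus J$ is noncontractible.

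The main obstacle is the edge case in which a strict upper set in $P$ is empty. This is exactly where $J^{\sharp}\geq2$ is needed, and it is also where beat points could otherwise be created among maximal points of $P$.
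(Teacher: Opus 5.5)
Your proof is correct and is essentially the paper's argument: extend a beat-point reduction of $P$ to its core $C$ into a reduction of $P\oplus J$ to $C\oplus J$. Then check that $C\oplus J$ has no beat points, because a beat-point-free space with at least two points has no maximum and the discrete $J$ with $J^{\sharp}\geq2$ has no minimum. You only reverse the order, proving the beat-point-free assertion first. The abandoned false start in your first bullet, suggesting that $\whU_p$ would have a maximum, should simply be deleted; your replacement argument via a maximal element of $P\setminus\{p\}$ is the right one.
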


\begin{proof}
Every beat-point deletion in $P$ remains a beat-point deletion after
adjoining $J$.  Indeed, strict lower sets of old points are unchanged.
If the strict upper set of an old point has a minimum, that minimum is
still below all the new points.  Hence a reduction of $P$ to a core $C$
extends to a reduction of $P\oplus J$ to $C\oplus J$.

Since $P$ is noncontractible, $C$ has at least two points.  It has no
greatest point, for a finite space with a greatest point is contractible.
We check that $C\oplus J$ has no beat points.  An old point cannot become a
down beat point because its strict lower set is unchanged.  For a
nonmaximal old point, a minimum of its new strict upper set would already
be a minimum among its old strict upper points.  An old maximal point has
strict upper set $J$, which has no minimum.  Finally, a new point has
strict lower set $C$, which has no maximum, and empty strict upper set.
Thus $C\oplus J$ is a space with more than one point and no beat points,
so it is noncontractible.  The same follows for $P\oplus J$ by homotopy
invariance.  When $P$ itself has no beat points, the argument applies
with $C=P$.
\end{proof}

\begin{proposition}\label{prop:ordinary}
If $P$ is noncontractible, then $\catLS(P_m)=m$ for every $m\geq2$.
\end{proposition}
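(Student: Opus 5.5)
Upper bound first. For each $i$ put $U_i=U_{t_i}=P\cup\{t_i\}$. This is an open set with a greatest element $t_i$, so it is contractible, and its inclusion into $P_m$ is nullhomotopic. The $m$ sets $U_1,\dots,U_m$ cover $P_m$, which gives $\catLS(P_m)\leq m$.

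For the lower bound, take any categorical open cover. Each $t_i$ must lie in some member, and any open set containing $t_i$ contains $U_{t_i}$. It therefore suffices to show that \emph{no categorical open set contains two distinct maxima} $t_i,t_j$; pigeonhole then forces at least $m$ members. So suppose $U$ is open, categorical, and contains $t_1,t_2$. Then $U$ contains the open set $V=P\cup\{t_1,t_2\}=P\oplus D_2$. Since a restriction of a nullhomotopic map is nullhomotopic, the inclusion $V\hookrightarrow P_m$ is nullhomotopic.

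The key step is a retraction $r\colon P_m\to V$ that fixes $V$. Define $r(t_i)=t_2$ for $i\geq2$ and let $r$ be the identity on $P\cup\{t_1\}$. This map is order preserving, because the $t_i$ are maximal, incomparable, and lie above all of $P$, and $t_2$ lies above all of $P$ as well. Composing a nullhomotopy $H$ of $V\hookrightarrow P_m$ with $r$ gives a nullhomotopy of $\mathrm{id}_V=r\circ\iota$. Hence $V=P\oplus D_2$ would be contractible. This contradicts Lemma~\ref{lem:Xm-noncontractible} applied with $J=D_2$. Therefore every categorical open set contains at most one $t_i$, and $\catLS(P_m)\geq m$.

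The only delicate point is this retraction argument. One must check that a nullhomotopic inclusion $A\hookrightarrow X$, together with a retraction $X\to A$, makes $A$ contractible: $\mathrm{id}_A=r\iota\simeq r\circ\mathrm{const}$, which is constant. One must also check that $r$ is continuous, which here means order preserving. Both checks are routine once written down. No weak-contractibility hypothesis on $P$ is needed anywhere.
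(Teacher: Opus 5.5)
Your proposal is correct and follows essentially the same route as the paper: an upper bound from the principal opens $U_{t_i}$, and a lower bound from an order-preserving retraction onto $P$ plus the new maxima, which combined with Lemma~\ref{lem:Xm-noncontractible} shows that no categorical open set contains two new maxima. The only cosmetic difference is that you first pass to the sub-open set $P\oplus D_2$, whereas the paper retracts directly onto $A_J=P\cup J$ with $J=U\cap D_m$.
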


\begin{proof}
The $m$ principal open sets $U_{t_i}=P\cup\{t_i\}$ have maxima and cover
$P_m$, so $\catLS(P_m)\leq m$.

For the reverse inequality, suppose that a categorical open subset $U$
contains at least two new maxima.  Put $J=U\cap D_m$.  Since $U$ is a
lower set, $U=P\cup J=:A_J$, where $J^{\sharp}\geq2$.
Fix $t\in J$.  The map $r\colon P_m\to A_J$ fixing $A_J$ and sending
each point of $D_m\setminus J$ to $t$ is an order-preserving retraction.
If the inclusion $A_J\hookrightarrow P_m$ were nullhomotopic, composing
with $r$ would make the identity of $A_J$ nullhomotopic, contrary to
Lemma~\ref{lem:Xm-noncontractible}.  Thus a categorical open set contains
at most one new maximum, and every categorical cover has at least $m$
members.
\end{proof}

\begin{samepage}
\begin{proposition}\label{prop:weak}
If $P$ is weakly contractible, then $\catw(P_m)=1$ for every $m\geq2$.
\end{proposition}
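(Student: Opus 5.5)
The plan is to compute $|\OO(P_m)|$ directly via the join formula \eqref{eq:ordinal-join}. Since $P_m=P\oplus D_m$, we have
\[
 \OO(P_m)=\OO(P)*\OO(D_m),
\]
and $\OO(D_m)$ is the zero-dimensional complex on the $m$ vertices $t_1,\dots,t_m$. Geometric realization commutes with joins of finite complexes, so $|\OO(P_m)|\cong|\OO(P)|*|\OO(D_m)|$.

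Next I use the hypothesis. By McCord's theorem, $P$ is weakly homotopy equivalent to $|\OO(P)|$, so weak contractibility of $P$ makes all homotopy groups of the CW complex $|\OO(P)|$ trivial. Whitehead's theorem then shows that $|\OO(P)|$ is contractible.

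The join of a contractible CW complex with any CW complex is contractible. Concretely, if $A\simeq\ast$, then $A*B\simeq \ast*B=CB$, the cone on $B$, which is contractible. This uses the fact that the join is a homotopy invariant in each variable for CW complexes, being a homotopy pushout of $A\leftarrow A\times B\to B$. Hence $|\OO(P_m)|$ is a nonempty contractible space, and in the unreduced convention its LS-category is $1$. By definition this gives $\catw(P_m)=1$.

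The only step needing care is the homotopy invariance of the join. I would cite it as the standard homotopy-pushout description, or argue that $A*B$ deformation retracts onto the cone $CB$ along a contraction of $A$. No step is a real obstacle. The condition $m\geq2$ is not used here; it matters only for the ordinary category computed in Proposition~\ref{prop:ordinary}.
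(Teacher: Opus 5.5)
Your proposal is correct and follows the paper's proof: McCord and Whitehead make $|\OO(P)|$ contractible, and then $|\OO(P_m)|=|\OO(P)*\OO(D_m)|$ is a join with a contractible factor, hence contractible, so $\catw(P_m)=1$. Your justification of the join step, via homotopy invariance or by contracting the first factor onto a cone, fills in a detail the paper leaves implicit.
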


\begin{proof}
McCord's theorem and Whitehead's theorem imply that $|\OO(P)|$ is
contractible.  By \eqref{eq:ordinal-join},
$\OO(P_m)=\OO(P)*\OO(D_m)$, whose realization is therefore contractible.
\end{proof}

\end{samepage}

\begin{proposition}\label{prop:stable}
Suppose that $P$ is noncontractible and has an essential point $g$.
Then, for every $m\geq2$ and $k\geq1$,
\[
 \catLS(\sd^kP_m)=\cats(P_m)=2.
\]
\end{proposition}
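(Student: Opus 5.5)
The plan is to prove an upper bound $\catLS(\sd^kP_m)\leq 2$ for all $k\geq1$ and a matching lower bound $\catLS(\sd^kP_m)\geq2$ for all $k\geq0$. Since the sequence $\catLS(\sd^kP_m)$ is nonincreasing and $\catLS(P_m)=m\geq2$ by Proposition~\ref{prop:ordinary}, the two bounds together give $\cats(P_m)=2$ and the stated equality for each $k\geq1$.

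The lower bound is immediate. By Lemma~\ref{lem:Xm-noncontractible}, $P_m=P\oplus D_m$ is noncontractible. Then \eqref{eq:contractibility-detection}, applied repeatedly, shows that no $\sd^kP_m$ is contractible. A finite space of category one has a categorical open set equal to the whole space, so it would be contractible. Hence $\catLS(\sd^kP_m)\geq2$ for all $k\geq0$.

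For the upper bound it suffices to treat $k=1$, because the sequence is nonincreasing. By \eqref{eq:comparison-scat}, $\catLS(\sd P_m)\leq\scat(\OO(P_m))$, so I will cover $K=\OO(P_m)=\OO(P)*\OO(D_m)$ by two strongly collapsible subcomplexes. The natural choice is
\[
 L_1=\OO\bigl((P\setminus\{g\})\oplus D_m\bigr),\qquad L_2=\operatorname{st}_K(g).
\]
These cover $K$: a simplex of $K$ is a chain in $P_m$, and it lies in $L_1$ if it avoids $g$ and in $L_2$ if it contains $g$. The star $L_2$ is a cone with apex $g$, so it is strongly collapsible.

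For $L_1$, the hypothesis says $P\setminus\{g\}$ is contractible. Its beat-point deletions remain beat-point deletions in $(P\setminus\{g\})\oplus D_m$, by the argument in the proof of Lemma~\ref{lem:Xm-noncontractible}. They reduce this space to $\{p\}\oplus D_m$, which has a minimum $p$ and is therefore contractible. So $(P\setminus\{g\})\oplus D_m$ is contractible. By \eqref{eq:contractibility-detection} its order complex $L_1$ is strongly collapsible, and here I use that $\OO$ of a subposet is the full subcomplex on those vertices. Both $L_1$ and $L_2$ are therefore categorical in $K$, giving $\scat(K)\leq2$.

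The main point to verify carefully is the claim that a strongly collapsible subcomplex is categorical in the ambient complex, so that the cover yields $\scat\leq 2$. This fact is stated in the preliminaries, so it can be invoked directly. Everything else is bookkeeping.
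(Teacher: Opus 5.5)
Your proposal is correct and follows the paper's proof. You use the same two-piece cover $\OO(P_m\setminus\{g\})\cup\operatorname{st}_K(g)$ with \eqref{eq:comparison-scat} for the upper bound, and the same lower bound via Lemma~\ref{lem:Xm-noncontractible}, \eqref{eq:contractibility-detection} and monotonicity. The only variation is how you show the first piece is strongly collapsible: you extend beat-point deletions to $(P\setminus\{g\})\oplus D_m$ and then apply \eqref{eq:contractibility-detection}, whereas the paper extends dominated-vertex deletions across the join $\OO(P\setminus\{g\})*\OO(D_m)$; both arguments are valid.
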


\begin{proof}
Put $K=\OO(P_m)$ and consider the subcomplexes
\[
 K_0=\OO(P_m\setminus\{g\}),\qquad K_1=\operatorname{st}_K(g).
\]
Every simplex either avoids $g$ or belongs to its closed star, so these
subcomplexes cover $K$.  The second is a cone.  The first is
\[
 K_0=\OO(P\setminus\{g\})*\OO(D_m).
\]
By \eqref{eq:contractibility-detection}, $\OO(P\setminus\{g\})$ is strongly
collapsible.  A dominated-vertex deletion remains such a deletion after
taking a join: each maximal simplex of a join is the union of maximal
simplices of its factors.  Hence $K_0$ strongly collapses to
$\{v\}*\OO(D_m)$ for some vertex $v$, and this last complex is a cone.
Both pieces are therefore strongly collapsible, so $\scat(K)\leq2$.
By \eqref{eq:comparison-scat}, $\catLS(\sd P_m)\leq2$.

Lemma~\ref{lem:Xm-noncontractible} shows that $P_m$ is noncontractible.
By \eqref{eq:contractibility-detection}, no subdivision of $P_m$ is
contractible.  Monotonicity under subdivision now gives
\[
 2\leq\catLS(\sd^kP_m)\leq\catLS(\sd P_m)\leq2
 \qquad(k\geq1),
\]
and the stable-category equality follows.
\end{proof}

\begin{repeatedtheorem}{thm:main}
\MainTheoremStatement
\end{repeatedtheorem}

\begin{proof}[Proof of Theorem~\ref{thm:main}]
The hypotheses give an essential point of $P$.
Apply Propositions~\ref{prop:ordinary}, \ref{prop:weak}, and
\ref{prop:stable}, and the connectedness and cardinality observations
following \eqref{eq:Pm}.
\end{proof}

\begin{corollary}\label{cor:essential-weak}
Let $P$ be a noncontractible finite $T_0$-space with an essential weak
point.  Then $P_m=P\oplus D_m$ satisfies all the conclusions of
Theorem~\ref{thm:main}.  In particular, this applies to every finite space
without beat points that has an essential weak point.
\end{corollary}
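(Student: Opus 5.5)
The corollary reduces to Theorem~\ref{thm:main}. That theorem needs three hypotheses on $P$: it is noncontractible, it is weakly contractible, and $P\setminus\{g\}$ is nonempty and contractible for some $g\in P$. Noncontractibility is assumed directly. The third hypothesis is exactly the definition of $g$ being essential (Definition~\ref{def:essential}), and an essential weak point is in particular essential. So the only thing left to check is weak contractibility of $P$.

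For weak contractibility I use the weak-point property of $g$. By \cite[Proposition~3.3]{BarmakMinian2008}, deleting a weak point induces a weak homotopy equivalence, so the inclusion $P\setminus\{g\}\hookrightarrow P$ is one. Since $g$ is essential, $P\setminus\{g\}$ is contractible, hence weakly contractible. A space weakly equivalent to a weakly contractible space is itself weakly contractible, so $P$ is weakly contractible. This is the remark already made after Definition~\ref{def:essential}. Theorem~\ref{thm:main} then applies verbatim and gives connectedness, the point count $P^{\sharp}+m$, the triple $(1,2,m)$, and $\catLS(\sd^kP_m)=2$ for all $k\geq1$.

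For the final sentence, let $P$ be a finite space with no beat points that has an essential weak point $g$. I need $P$ to be noncontractible. Since $P\setminus\{g\}$ is nonempty, $P$ has at least two points. A contractible finite space reduces to a point by beat-point deletions \cite{Stong1966}, so a space with at least two points and no beat points cannot be contractible. Hence $P$ is noncontractible, and the first part of the corollary applies.

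I see no real obstacle here. The only subtle point is the direction of the Barmak--Minian result: deleting a weak point is merely a weak equivalence, not a homotopy equivalence. That is precisely why only weak contractibility of $P$, and not contractibility, is obtained.
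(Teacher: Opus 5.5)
Your proposal is correct and follows the paper's proof essentially verbatim. Weak contractibility comes from the Barmak--Minian weak-point deletion onto the contractible complement, then Theorem~\ref{thm:main} applies, and the final sentence uses that a beat-point-free space with at least two points is noncontractible.
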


\begin{proof}
Deleting the given weak point yields a weak homotopy equivalence to a
contractible space, so $P$ is weakly contractible.  Theorem~\ref{thm:main}
applies.  A space without beat points and with an essential point has at
least two points and is therefore noncontractible.
\end{proof}

\section{Small bases and examples}\label{sec:core}

Cianci and Ottina proved that, up to homeomorphism, there are exactly two
nine-point weakly contractible noncontractible finite spaces
\cite{CianciOttina2020}.  We first describe a representative of one of those
two types and verify the hypotheses of the general criterion.  To make the construction self-contained, let $P(9)$ be
the poset with minima $c_0,c_1,c_2$, middle points $p,q,y$, maxima $g,h,k$,
and covering relations
\begin{align*}
 c_0&<p,y, & c_1&<p,q,y, & c_2&<q,y,\\
 p&<g,h, & q&<g,k, & y&<h,k.
\end{align*}
Its Hasse diagram is shown in Figure~\ref{fig:P9}.

\begin{figure}[htbp]
\centering
\begin{tikzpicture}[x=0.8cm,y=0.85cm,
  every node/.style={font=\scriptsize,inner sep=1.5pt,fill=white}]

  \node (c0) at (0.0,0) {$c_0$};
  \node (c1) at (1.5,0) {$c_1$};
  \node (c2) at (3.0,0) {$c_2$};

  \node (p) at (0.0,1) {$p$};
  \node (y) at (1.5,1) {$y$};
  \node (q) at (3.0,1) {$q$};

  \node (h) at (0.0,2) {$h$};
  \node (g) at (1.5,2) {$g$};
  \node (k) at (3.0,2) {$k$};

  \draw (c0)--(p);
  \draw (c0)--(y);
  \draw (c1)--(p);
  \draw (c1)--(q);
  \draw (c1)--(y);
  \draw (c2)--(q);
  \draw (c2)--(y);

  \draw (p)--(g);
  \draw (p)--(h);
  \draw (q)--(g);
  \draw (q)--(k);
  \draw (y)--(h);
  \draw (y)--(k);

\end{tikzpicture}
\caption{The nine-point core $P(9)$.}
\label{fig:P9}
\end{figure}

\begin{lemma}\label{lem:P9}
The space $P(9)$ has no beat points, and $g$ is an essential weak point.
In particular, $P(9)$ is weakly contractible but noncontractible.
\end{lemma}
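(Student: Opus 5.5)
The plan is a direct finite check in three parts: absence of beat points, essentiality of $g$, and the weak-point condition at $g$. Corollary~\ref{cor:essential-weak} (or the remark after Definition~\ref{def:essential}) then gives weak contractibility. Noncontractibility follows because $P(9)$ is a space with more than one point and no beat points.

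\emph{No beat points.} I would go level by level, using the covering relations. For the minima, $\whF_{c_0}=\{p,y,g,h,k\}$ has minimal elements $p,y$, so it has no minimum. Similarly $\whF_{c_1}$ has minimal elements $p,q,y$ and $\whF_{c_2}$ has minimal elements $q,y$. A minimum has empty strict lower set, so no minimum is a down beat point. For the middle points, $\whU_p=\{c_0,c_1\}$, $\whU_q=\{c_1,c_2\}$ and $\whU_y=\{c_0,c_1,c_2\}$ are antichains with at least two elements, so none has a maximum. Likewise $\whF_p=\{g,h\}$, $\whF_q=\{g,k\}$ and $\whF_y=\{h,k\}$ have no minimum. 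For the maxima, $\whU_g=\{p,q,c_0,c_1,c_2\}$ has two maximal elements $p,q$, and the same holds for $h$ (with $p,y$) and $k$ (with $q,y$). So there are no beat points.

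\emph{$g$ is a weak point.} I would look at $\whU_g=\{c_0,c_1,c_2,p,q\}$. Its relations are $c_0<p$, $c_1<p$, $c_1<q$ and $c_2<q$, so its Hasse diagram is the zigzag path $c_0-p-c_1-q-c_2$. Removing the beat points $c_0$ and then $c_2$ (each has a unique cover, so it is an up beat point), and then $p$ (whose strict lower set is $\{c_1\}$), leaves $c_1<q$, which has a maximum. Hence $\whU_g$ is contractible and $g$ is a weak point.

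\emph{$g$ is essential.} This is the main step. In $P(9)\setminus\{g\}$, the point $p$ has strict upper set $\{h\}$ and $q$ has strict upper set $\{k\}$, so both are up beat points. I would delete $p$, which leaves $c_0<y,h$, $c_1<q,y,h$ and so on. Then I would delete $q$ in the same way. In what remains, $c_0$, $c_1$ and $c_2$ all lie below $y$, $h$ and $k$, with $y<h,k$. Each $c_i$ then has $\whF_{c_i}=\{y,h,k\}$, whose minimum is $y$, so each $c_i$ is an up beat point. After deleting them we are left with $y<h,k$, where $y$ is a minimum. Deleting $h$ (whose strict lower set is $\{y\}$) and then $k$ leaves one point. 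So $P(9)\setminus\{g\}$ is nonempty and contractible.

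The only care needed is to recompute the strict upper sets of the $c_i$ after $p$ and $q$ are removed, keeping the transitive relations $c_i<h$ and $c_i<k$. This is the step where I expect the bookkeeping obstacle to lie. Once it is done, Corollary~\ref{cor:essential-weak} gives the final sentence of the lemma.
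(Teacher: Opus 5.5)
Your proposal is correct and follows the paper's proof essentially step for step: the same level-by-level beat-point check, the same zigzag path $c_0-p-c_1-q-c_2$ showing $\whU_g$ is contractible, and a beat-point reduction of $P(9)\setminus\{g\}$ to $y$ that differs from the paper's only in the order of deletions. For the last sentence, the right citation is the remark after Definition~\ref{def:essential}, which amounts to the Barmak--Minian weak equivalence $P(9)\setminus\{g\}\hookrightarrow P(9)$; Corollary~\ref{cor:essential-weak} as stated only asserts conclusions about $P\oplus D_m$, not weak contractibility of $P$ itself.
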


\begin{proof}
The minimal elements of $\whF_{c_0},\whF_{c_1},\whF_{c_2}$ are,
respectively,
\[
  \{p,y\},\qquad \{p,q,y\},\qquad \{q,y\}.
\]
For each middle point, both the maximal elements of its strict lower set and
the minimal elements of its strict upper set contain at least two points.
Finally, the maximal elements of $\widehat{U}_g, \widehat{U}_h, \widehat{U}_k$  are
\[
  \{p,q\},\qquad \{p,y\},\qquad \{q,y\}.
\]
Thus $P(9)$ has no beat point.  Since it has more than one point, it is
noncontractible.

After deleting $g$, the following is a valid beat-point reduction to the
single point $y$:
\[
  p,\ q,\ c_0,\ c_2,\ h,\ k,\ c_1.
\]
Indeed, in order, $p$ and $q$ are up beat points dominated by $h$ and $k$;
$c_0$ and $c_2$ are up beat points dominated by $y$; $h$ and $k$ are down
beat points dominated by $y$; and $c_1$ is an up beat point dominated by
$y$.  The corresponding vertex deletions strongly collapse
$\OO(P(9)\setminus\{g\})$.

The strict lower set $\whU_g$ has Hasse graph the path
\[
 c_0-p-c_1-q-c_2.
\]
Deleting $c_0,c_2,p,q$, in that order, leaves $c_1$; hence $\whU_g$ is
contractible.  Thus $g$ is both weak and essential.  Deleting this weak
point yields a weak homotopy equivalence from a contractible space to
$P(9)$, so $P(9)$ is weakly contractible.
\end{proof}

\begin{corollary}\label{cor:subdivision-profile}
For every $m\geq2$, the connected space
\begin{equation}\label{eq:Xm}
 X_m=P(9)\oplus D_m
\end{equation}
has $m+9$ points and category triple $(1,2,m)$.  Moreover,
$\catLS(\sd^kX_m)=2$ for every $k\geq1$.  In particular, $X_3$ has twelve
points and satisfies $\catw(X_3)<\cats(X_3)<\catLS(X_3)$.
\end{corollary}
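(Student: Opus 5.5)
The corollary should follow directly from Theorem~\ref{thm:main} (or equivalently Corollary~\ref{cor:essential-weak}) once its hypotheses are checked for $P=P(9)$ using Lemma~\ref{lem:P9}. The plan has three steps.

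First, I will verify the hypotheses of Theorem~\ref{thm:main} for $P(9)$.
\begin{itemize}
\item By Lemma~\ref{lem:P9}, $P(9)$ is weakly contractible and noncontractible.
\item It has $P^{\sharp}=9$ points: $c_0,c_1,c_2,p,q,y,g,h,k$.
\item The point $g$ is essential, so $P(9)\setminus\{g\}$ is nonempty (eight points) and contractible.
\end{itemize}
Alternatively, since $P(9)$ has no beat points and $g$ is an essential weak point, Corollary~\ref{cor:essential-weak} applies verbatim.

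Second, I will read off the conclusions. Theorem~\ref{thm:main} gives, for every $m\geq2$, that $X_m=P(9)\oplus D_m$ is connected, has $9+m$ points, satisfies $(\catw(X_m),\cats(X_m),\catLS(X_m))=(1,2,m)$, and has $\catLS(\sd^kX_m)=2$ for every $k\geq1$.

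Third, I will specialize to $m=3$. This gives twelve points and the triple $(1,2,3)$, hence the strict chain $1<2<3$.

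There is no real obstacle here, since all the substantive work sits in Propositions~\ref{prop:ordinary}, \ref{prop:weak}, \ref{prop:stable} and Lemma~\ref{lem:P9}. The only point needing care is confirming that essentiality in the sense of Definition~\ref{def:essential} is exactly the hypothesis ``$P\setminus\{g\}$ nonempty and contractible'' of Theorem~\ref{thm:main}. This holds by definition, and the beat-point reduction to $y$ given in Lemma~\ref{lem:P9} supplies the contractibility.
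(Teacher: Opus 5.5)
Your proposal is correct and is essentially the paper's proof, which simply applies Corollary~\ref{cor:essential-weak} to $P(9)$ and $g$ via Lemma~\ref{lem:P9}. Going directly to Theorem~\ref{thm:main} with the hypotheses read off from Lemma~\ref{lem:P9} is the same argument, since that corollary itself only reduces to the theorem.
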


\begin{proof}
Apply Corollary~\ref{cor:essential-weak} to $P(9)$ and $g$.
\end{proof}

\begin{example}[The opposite base]\label{ex:opposite}
The opposite finite space $P(9)^{\mathrm{op}}$ also has no beat points, and deleting
$g$ again leaves a contractible finite space.  The strict upper set of $g$ is now
contractible, so $g$ is again a weak point.  Thus $P(9)^{\mathrm{op}}\oplus D_m$ gives another family with
category triple $(1,2,m)$ on $m+9$ points.  The two bases $P(9)$ and $P(9)^{\mathrm{op}}$ are not isomorphic:
$P(9)$ has a minimal element $c_1$ such that $F_{c_1}^{\sharp}=7$, whereas every
minimal element $x \in \{g,h,k\}$ of $P(9)^{\mathrm{op}}$ has $F_{x}^{\sharp}=6$. Since the
new maxima are precisely the maximal elements of either ordinal sum, an
isomorphism of the sums would restrict to an isomorphism of the bases.
Hence the resulting spaces are not homeomorphic.
\end{example}

\begin{example}[Ten-point bases]\label{ex:ten-point-bases}
Let $P$ be one of the seven ten-point posets of Types I--VII specified in
\cite{DasMawiong2026}, with the incidence relations shown in
Figure~\ref{fig:ten-point-finite-spaces}.  

\begin{figure}[htbp]
\centering
\begin{tikzpicture}[x=0.8cm,y=0.85cm,
  every node/.style={font=\scriptsize,inner sep=1.5pt,fill=white}]
\begin{scope}[shift={(0,0)}]
\node at (1.5,2.65) {Type I};
\node (I-a1) at (0.0,2) {$a_1$};
\node (I-a2) at (1.0,2) {$a_2$};
\node (I-a3) at (2.0,2) {$a_3$};
\node (I-a4) at (3.0,2) {$a_4$};
\node (I-b1) at (0.0,1) {$b_1$};
\node (I-b2) at (1.5,1) {$b_2$};
\node (I-b3) at (3.0,1) {$b_3$};
\node (I-c1) at (0.0,0) {$c_1$};
\node (I-c2) at (1.5,0) {$c_2$};
\node (I-c3) at (3.0,0) {$c_3$};
\draw (I-c1)--(I-b1);
\draw (I-c1)--(I-b2);
\draw (I-c2)--(I-b1);
\draw (I-c2)--(I-b3);
\draw (I-c3)--(I-b2);
\draw (I-c3)--(I-b3);
\draw (I-b1)--(I-a1);
\draw (I-b1)--(I-a2);
\draw (I-b2)--(I-a1);
\draw (I-b2)--(I-a3);
\draw (I-b2)--(I-a4);
\draw (I-b3)--(I-a1);
\draw (I-b3)--(I-a2);
\draw (I-b3)--(I-a3);
\draw (I-b3)--(I-a4);
\end{scope}
\begin{scope}[shift={(4,0)}]
\node at (1.5,2.65) {Type II};
\node (II-a1) at (0.0,2) {$a_1$};
\node (II-a2) at (1.0,2) {$a_2$};
\node (II-a3) at (2.0,2) {$a_3$};
\node (II-a4) at (3.0,2) {$a_4$};
\node (II-b1) at (0.0,1) {$b_1$};
\node (II-b2) at (1.5,1) {$b_2$};
\node (II-b3) at (3.0,1) {$b_3$};
\node (II-c1) at (0.0,0) {$c_1$};
\node (II-c2) at (1.5,0) {$c_2$};
\node (II-c3) at (3.0,0) {$c_3$};
\draw (II-c1)--(II-b1);
\draw (II-c1)--(II-b2);
\draw (II-c2)--(II-b1);
\draw (II-c2)--(II-b3);
\draw (II-c3)--(II-b2);
\draw (II-c3)--(II-b3);
\draw (II-b1)--(II-a1);
\draw (II-b1)--(II-a2);
\draw (II-b1)--(II-a3);
\draw (II-b2)--(II-a1);
\draw (II-b2)--(II-a2);
\draw (II-b2)--(II-a4);
\draw (II-b3)--(II-a1);
\draw (II-b3)--(II-a3);
\draw (II-b3)--(II-a4);
\end{scope}
\begin{scope}[shift={(8,0)}]
\node at (1.5,2.65) {Type III};
\node (III-a1) at (0.0,2) {$a_1$};
\node (III-a2) at (1.0,2) {$a_2$};
\node (III-a3) at (2.0,2) {$a_3$};
\node (III-a4) at (3.0,2) {$a_4$};
\node (III-b1) at (0.0,1) {$b_1$};
\node (III-b2) at (1.5,1) {$b_2$};
\node (III-b3) at (3.0,1) {$b_3$};
\node (III-c1) at (0.0,0) {$c_1$};
\node (III-c2) at (1.5,0) {$c_2$};
\node (III-c3) at (3.0,0) {$c_3$};
\draw (III-c1)--(III-b1);
\draw (III-c1)--(III-b3);
\draw (III-c2)--(III-b2);
\draw (III-c2)--(III-b3);
\draw (III-c3)--(III-b1);
\draw (III-c3)--(III-b2);
\draw (III-c3)--(III-b3);
\draw (III-b1)--(III-a1);
\draw (III-b1)--(III-a2);
\draw (III-b1)--(III-a3);
\draw (III-b2)--(III-a1);
\draw (III-b2)--(III-a2);
\draw (III-b2)--(III-a4);
\draw (III-b3)--(III-a3);
\draw (III-b3)--(III-a4);
\end{scope}
\begin{scope}[shift={(12,0)}]
\node at (1.5,2.65) {Type IV};
\node (IV-a1) at (0.0,2) {$a_1$};
\node (IV-a2) at (1.5,2) {$a_2$};
\node (IV-a3) at (3.0,2) {$a_3$};
\node (IV-b1) at (0.0,1) {$b_1$};
\node (IV-b2) at (1.0,1) {$b_2$};
\node (IV-b3) at (2.0,1) {$b_3$};
\node (IV-b4) at (3.0,1) {$b_4$};
\node (IV-c1) at (0.0,0) {$c_1$};
\node (IV-c2) at (1.5,0) {$c_2$};
\node (IV-c3) at (3.0,0) {$c_3$};
\draw (IV-c1)--(IV-b1);
\draw (IV-c1)--(IV-b3);
\draw (IV-c2)--(IV-b2);
\draw (IV-c2)--(IV-b4);
\draw (IV-c3)--(IV-b1);
\draw (IV-c3)--(IV-b2);
\draw (IV-c3)--(IV-b3);
\draw (IV-c3)--(IV-b4);
\draw (IV-b1)--(IV-a1);
\draw (IV-b1)--(IV-a2);
\draw (IV-b2)--(IV-a1);
\draw (IV-b2)--(IV-a2);
\draw (IV-b3)--(IV-a1);
\draw (IV-b3)--(IV-a3);
\draw (IV-b4)--(IV-a1);
\draw (IV-b4)--(IV-a3);
\end{scope}
\begin{scope}[shift={(2,-3.5)}]
\node at (1.5,2.65) {Type V};
\node (V-a1) at (0.0,2) {$a_1$};
\node (V-a2) at (1.5,2) {$a_2$};
\node (V-a3) at (3.0,2) {$a_3$};
\node (V-b1) at (0.0,1) {$b_1$};
\node (V-b2) at (1.0,1) {$b_2$};
\node (V-b3) at (2.0,1) {$b_3$};
\node (V-b4) at (3.0,1) {$b_4$};
\node (V-c1) at (0.0,0) {$c_1$};
\node (V-c2) at (1.5,0) {$c_2$};
\node (V-c3) at (3.0,0) {$c_3$};
\draw (V-c1)--(V-b1);
\draw (V-c1)--(V-b3);
\draw (V-c2)--(V-b1);
\draw (V-c2)--(V-b2);
\draw (V-c2)--(V-b4);
\draw (V-c3)--(V-b2);
\draw (V-c3)--(V-b3);
\draw (V-c3)--(V-b4);
\draw (V-b1)--(V-a1);
\draw (V-b1)--(V-a2);
\draw (V-b2)--(V-a1);
\draw (V-b2)--(V-a2);
\draw (V-b3)--(V-a1);
\draw (V-b3)--(V-a3);
\draw (V-b4)--(V-a1);
\draw (V-b4)--(V-a3);
\end{scope}
\begin{scope}[shift={(6,-3.5)}]
\node at (1.5,2.65) {Type VI};
\node (VI-a1) at (0.0,2) {$a_1$};
\node (VI-a2) at (1.5,2) {$a_2$};
\node (VI-a3) at (3.0,2) {$a_3$};
\node (VI-b1) at (0.0,1) {$b_1$};
\node (VI-b2) at (1.0,1) {$b_2$};
\node (VI-b3) at (2.0,1) {$b_3$};
\node (VI-b4) at (3.0,1) {$b_4$};
\node (VI-c1) at (0.0,0) {$c_1$};
\node (VI-c2) at (1.5,0) {$c_2$};
\node (VI-c3) at (3.0,0) {$c_3$};
\draw (VI-c1)--(VI-b1);
\draw (VI-c1)--(VI-b3);
\draw (VI-c2)--(VI-b2);
\draw (VI-c2)--(VI-b4);
\draw (VI-c3)--(VI-b1);
\draw (VI-c3)--(VI-b2);
\draw (VI-c3)--(VI-b3);
\draw (VI-c3)--(VI-b4);
\draw (VI-b1)--(VI-a1);
\draw (VI-b1)--(VI-a2);
\draw (VI-b2)--(VI-a1);
\draw (VI-b2)--(VI-a2);
\draw (VI-b3)--(VI-a1);
\draw (VI-b3)--(VI-a3);
\draw (VI-b4)--(VI-a2);
\draw (VI-b4)--(VI-a3);
\end{scope}
\begin{scope}[shift={(10,-3.5)}]
\node at (1.5,2.65) {Type VII};
\node (VII-a1) at (0.0,2) {$a_1$};
\node (VII-a2) at (1.5,2) {$a_2$};
\node (VII-a3) at (3.0,2) {$a_3$};
\node (VII-b1) at (0.0,1) {$b_1$};
\node (VII-b2) at (1.0,1) {$b_2$};
\node (VII-b3) at (2.0,1) {$b_3$};
\node (VII-b4) at (3.0,1) {$b_4$};
\node (VII-c1) at (0.0,0) {$c_1$};
\node (VII-c2) at (1.5,0) {$c_2$};
\node (VII-c3) at (3.0,0) {$c_3$};
\draw (VII-c1)--(VII-b1);
\draw (VII-c1)--(VII-b3);
\draw (VII-c2)--(VII-b1);
\draw (VII-c2)--(VII-b2);
\draw (VII-c2)--(VII-b4);
\draw (VII-c3)--(VII-b2);
\draw (VII-c3)--(VII-b3);
\draw (VII-c3)--(VII-b4);
\draw (VII-b1)--(VII-a1);
\draw (VII-b1)--(VII-a2);
\draw (VII-b2)--(VII-a1);
\draw (VII-b2)--(VII-a2);
\draw (VII-b3)--(VII-a1);
\draw (VII-b3)--(VII-a3);
\draw (VII-b4)--(VII-a2);
\draw (VII-b4)--(VII-a3);
\end{scope}
\end{tikzpicture}
\caption{The seven ten-point posets, with incidence relations as specified
in \cite{DasMawiong2026}.}
\label{fig:ten-point-finite-spaces}
\end{figure}

For each of these posets and every
$m\geq2$, the space $P\oplus D_m$ is connected, has $m+10$ points, and satisfies
\[
 \bigl(\catw(P\oplus D_m),\cats(P\oplus D_m),\catLS(P\oplus D_m)\bigr)
 =(1,2,m).
\]
Moreover, $\catLS(\sd^k(P\oplus D_m))=2$ for every $k\geq1$.
We verify these assertions directly from the specified posets.

None of the seven posets has a beat point.  Indeed, each minimal point
has at least two minimal elements in its strict upper set, each maximal
point has at least two maximal elements in its strict lower set, and
both conditions hold for every middle point.  Thus every base is
noncontractible.

For Types I, III, IV, V, VI, and VII, the chosen points and the
beat-point reductions in Table~\ref{tab:ten-point-bases} show that deleting
$g$ leaves a contractible space.

\begin{table}[htb]
\centering
\begin{tabular}{ccl}
\toprule
Type & $g$ & Beat-point deletions after removing $g$ \\
\midrule
I & $a_2$ & $b_1,c_1,b_2,a_1,a_3,a_4,c_2,b_3$ \\
III & $c_1$ & $b_1,a_1,a_2,a_3,b_2,a_4,c_2,b_3$ \\
IV, V & $a_2$ & $b_1,b_2,c_1,b_3,a_1,a_3,c_2,b_4$ \\
VI, VII & $a_3$ & $b_3,b_4,c_1,b_1,a_1,a_2,c_2,b_2$ \\
\bottomrule
\end{tabular}
\caption{Essential weak points in six ten-point bases.  Each deletion
sequence leaves $c_3$.  Type II is treated separately using a cover of
its order complex by two strongly collapsible subcomplexes.}
\label{tab:ten-point-bases}
\end{table}

In Type III, the strict upper set of
$g=c_1$ has the tree with edges
\[
 b_1a_1,\ b_1a_2,\ b_1a_3,\ b_3a_3,\ b_3a_4
\]
as its Hasse graph.  For the other five types, the strict lower set of the
chosen $g$ has a path as its Hasse graph.  Thus every chosen $g$ is an
essential weak point, and Corollary~\ref{cor:essential-weak} applies.

For Type II we use a different cover of the order complex.  First,
$P$ is weakly contractible: delete $a_4$ and then $a_2$ as weak points.
Their strict lower sets, at the respective stages, have Hasse graphs
\[
 c_1-b_2-c_3-b_3-c_2,
 \qquad
 c_2-b_1-c_1-b_2-c_3.
\]
The remaining space admits the beat-point deletions
$b_2,c_1,b_1,a_1,a_3,c_2,c_3$, leaving $b_3$.
Consequently Proposition~\ref{prop:weak} gives $\catw(P\oplus D_m)=1$,
and Proposition~\ref{prop:ordinary} gives $\catLS(P\oplus D_m)=m$.

Put $K=\OO(P)$, and write $[u,v,w]$ for the simplex on the indicated
vertices, including all its faces.  Let $L_1$ be the subcomplex generated by
\[
 [c_1,b_1,a_3],\quad [c_2,b_1,a_3],\quad
 [c_1,b_2,a_4],\quad [c_3,b_2,a_4],
\]
and let $L_0$ be the subcomplex generated by the remaining fourteen
triangles of $K$.  Since $K$ is pure of dimension two, $K=L_0\cup L_1$.
Both subcomplexes are strongly collapsible, as follows.

In $L_1$, the vertices $a_3$ and $a_4$ are dominated by $b_1$ and $b_2$,
respectively.  Their deletion leaves the path
\[
 T=c_2-b_1-c_1-b_2-c_3,
\]
which is strongly collapsible.  In $L_0$, delete $a_3$ and $a_4$, each
dominated by $b_3$, and then delete $b_3$, dominated by $a_1$.
The remaining complex is the join of $T$ with the discrete two-vertex
complex on $\{a_1,a_2\}$.  This join is strongly collapsible, because
strong collapses are preserved by joins and the join of a vertex with
any complex is a cone.

It follows that
\[
 \OO(P\oplus D_m)
 =K*\OO(D_m)
 =\bigl(L_0*\OO(D_m)\bigr)\cup
   \bigl(L_1*\OO(D_m)\bigr)
\]
is covered by two strongly collapsible subcomplexes.  Hence
$\scat(\OO(P\oplus D_m))\leq2$, and
\eqref{eq:comparison-scat} gives $\catLS(\sd(P\oplus D_m))\leq2$.
By Lemma~\ref{lem:Xm-noncontractible} and
\eqref{eq:contractibility-detection}, no subdivision of $P\oplus D_m$
is contractible.  Monotonicity under subdivision therefore gives
$\catLS(\sd^k(P\oplus D_m))=2$ for every $k\geq1$, and
$\cats(P\oplus D_m)=2$.
\end{example}

\begin{example}[The Wallet]\label{ex:wallet}
The eleven-point Wallet $W$ of Barmak and Minian
\cite{BarmakMinian2008}(Figure \ref{fig:W})
is a noncontractible space without beat points.  They identify a weak
point $x$ for which $W\setminus\{x\}$ reduces to one point by beat-point
deletions.  Thus $x$ is essential in the terminology of
Definition~\ref{def:essential}, and $W\oplus D_m$ has category triple
$(1,2,m)$ on $m+11$ points, with category two after every positive number
of subdivisions.

\begin{figure}[htbp]
\centering
\begin{tikzpicture}[x=0.8cm,y=0.85cm,
  every node/.style={font=\scriptsize,inner sep=1.5pt,fill=white}]

  \node (t1) at (0.0,2) {$a_1$};
  \node (t2) at (1.0,2) {$a_2$};
  \node (t3) at (2.0,2) {$x$};
  \node (t4) at (3.0,2) {$a_3$};

  \node (m1) at (0.0,1) {$b_1$};
  \node (m2) at (1.0,1) {$b_2$};
  \node (m3) at (2.0,1) {$b_3$};
  \node (m4) at (3.0,1) {$b_4$};

  \node (b1) at (0.5,0) {$c_1$};
  \node (b2) at (1.5,0) {$c_2$};
  \node (b3) at (2.5,0) {$c_3$};

  \draw (t1)--(m1);
  \draw (t1)--(m2);
  \draw (t2)--(m1);
  \draw (t2)--(m3);
  \draw (t3)--(m2);
  \draw (t3)--(m4);
  \draw (t4)--(m3);
  \draw (t4)--(m4);

  \draw (m1)--(b1);
  \draw (m1)--(b2);
  \draw (m2)--(b1);
  \draw (m2)--(b2);
  \draw (m3)--(b2);
  \draw (m3)--(b3);
  \draw (m4)--(b2);
  \draw (m4)--(b3);

\end{tikzpicture}
\caption{Weakly contractible and noncontractible eleven-point space.}
\label{fig:W}
\end{figure}
\end{example}

The lower bound of nine points for weakly contractible noncontractible
spaces \cite{CianciOttina2020} shows that the two nine-point bases are
optimal in size under the hypotheses of Theorem~\ref{thm:main}.
Consequently, twelve is the minimum size of a strict example obtained
from that theorem: the base needs at least nine points and strictness
requires $m\geq3$.  This is a minimum within the stated construction;
absolute minimality is a separate question.

\section{Additivity and realization}\label{sec:realization}

We next record that all three categories are additive under disjoint unions.
This is useful because the invariants use the unreduced convention.

\begin{proposition}\label{prop:additivity}
For nonempty finite $T_0$-spaces $X$ and $Y$,
\begin{align*}
 \catLS(X\sq Y)&=\catLS(X)+\catLS(Y),\\
 \cats(X\sq Y)&=\cats(X)+\cats(Y),\\
 \catw(X\sq Y)&=\catw(X)+\catw(Y).
\end{align*}
\end{proposition}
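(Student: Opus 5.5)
We prove the three identities separately, since each category is defined in a different setting. The common underlying fact is the following. In the unreduced convention, a categorical set must lie in a single path component of the ambient space. The reason is that a nullhomotopic inclusion factors up to homotopy through a point.

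\emph{Ordinary category.} Let $X\sq Y$ be the disjoint union. Then $X$ and $Y$ are open and closed, and they are unions of components of $X\sq Y$.

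For the upper bound, take categorical covers $\{U_i\}_{i\le a}$ of $X$ and $\{V_j\}_{j\le b}$ of $Y$. Each $U_i$ is still open in $X\sq Y$. Its inclusion is the composite of a nullhomotopic map with the inclusion $X\hookrightarrow X\sq Y$, so it is nullhomotopic. The same holds for each $V_j$, so $\catLS(X\sq Y)\le a+b$.

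For the lower bound, let $W$ be categorical in $X\sq Y$. A homotopy from the inclusion $W\hookrightarrow X\sq Y$ to a constant map at a point $z$ keeps each point of $W$ in a fixed component, because paths stay in a component. So if $z\in X$, then $W\subseteq X$. The homotopy then lands in $X$, since $X$ is a union of components, and this shows $W$ is categorical in $X$.

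Hence a categorical cover of $X\sq Y$ splits into a categorical cover of $X$ and one of $Y$. A nonempty space needs at least one member, which covers the edge cases. This gives $\catLS(X\sq Y)=\catLS(X)+\catLS(Y)$.

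\emph{Weak category.} Since $\OO(X\sq Y)=\OO(X)\sq\OO(Y)$, we have $|\OO(X\sq Y)|=|\OO(X)|\sq|\OO(Y)|$. The argument above runs verbatim for general topological spaces with open covers. The only fact used is that a nullhomotopy of an inclusion stays in one path component. Both pieces are open and closed and are unions of path components. This gives the third identity.

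\emph{Stable category.} Subdivision commutes with disjoint union: $\sd^k(X\sq Y)=\sd^kX\sq\sd^kY$. By the ordinary case,
\[
 \catLS(\sd^k(X\sq Y))=\catLS(\sd^kX)+\catLS(\sd^kY).
\]
Both sequences on the right are nonincreasing and eventually constant. So for $k$ large, both terms simultaneously attain their minima $\cats(X)$ and $\cats(Y)$. The left side is likewise nonincreasing, and its minimum is its eventual value, which equals $\cats(X)+\cats(Y)$.

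The only delicate point is the lower bound in the ordinary case. There we must check that the homotopy can be taken inside $X$ rather than merely inside $X\sq Y$. This holds because $X$ is open and closed and the image of $W\times[0,1]$ under the homotopy is a union of connected sets, each meeting $X$, so the whole image lies in $X$.
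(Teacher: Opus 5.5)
Your proof is correct and follows the paper's argument: a categorical set lies in a single summand because the homotopy traces paths within components, so categorical covers split and combine; the same reasoning applies to $|\OO(X)|\sqcup|\OO(Y)|$; and the stable formula follows from $\sd^k$ commuting with disjoint unions, taking $k$ beyond both stabilization indices. You additionally spell out why the nullhomotopy restricts to one inside $X$ (using that $X$ is clopen), a step the paper states without detail.
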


\begin{proof}
A categorical subset of $X\sq Y$ cannot meet both summands: the path
traced by a point during a homotopy stays within its summand.  Moreover,
a subset of $X$ is categorical in $X\sq Y$ exactly when it is categorical
in $X$, and similarly for $Y$.  Thus any categorical open cover splits
into covers of the two summands, giving the lower bound; adjoining optimal
covers gives the upper bound.  The same argument applies to classical category and
$|\OO(X\sq Y)|=|\OO(X)|\sq|\OO(Y)|$, proving the weak formula.

Barycentric subdivision commutes with disjoint union.  Hence for every $k$,
\[
 \catLS\bigl(\sd^k(X\sq Y)\bigr)
 =\catLS(\sd^kX)+\catLS(\sd^kY).
\]
The two nonincreasing integer sequences on the right are eventually
constant.  Taking $k$ beyond both stabilization indices proves the stable
formula.
\end{proof}

Besides the atoms supplied by Corollary~\ref{cor:subdivision-profile}, we use the height-one
family already noted in \cite[Remark~2.3]{CardenasFloresQuinteroVillar2025}.
Let $B_n$ be the poset with minima $u,v$, maxima $z_1,\dots,z_n$, and
relations
\[
  u<z_i,\qquad v<z_i\qquad(1\leq i\leq n).
\]

\Needspace{8\baselineskip}
\begin{proposition}\label{prop:Bn}
For every $n\geq2$,
\[
  \bigl(\catw(B_n),\cats(B_n),\catLS(B_n)\bigr)=(2,2,n).
\]
\end{proposition}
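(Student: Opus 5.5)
Observe first that $B_n=S^0\oplus D_n$, where $S^0=\{u,v\}$ is the discrete two-point space. This lets us reuse the machinery of Section~\ref{sec:family} for two of the three values, with one change: $S^0$ is not weakly contractible, so the weak category needs a separate argument.

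\emph{Ordinary category.} The two-point discrete space $S^0$ is noncontractible, since it is disconnected. Proposition~\ref{prop:ordinary} applies verbatim and gives $\catLS(B_n)=n$; its proof only used noncontractibility of the base. Concretely, the cover by the $n$ sets $U_{z_i}$ shows $\catLS(B_n)\le n$. For the lower bound, an open set $P\cup J$ with $J^\sharp\ge2$ retracts from $B_n$. Such a set is $S^0\oplus J$, which is noncontractible by Lemma~\ref{lem:Xm-noncontractible}, so it is not categorical.

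\emph{Weak category.} By \eqref{eq:ordinal-join}, $\OO(B_n)=\OO(S^0)*\OO(D_n)$, the join of two points with $n$ points. This is the complete bipartite graph $K_{2,n}$. Its realization is a connected graph with first Betti number $2n-(n+2)+1=n-1\ge1$, so it is homotopy equivalent to a nontrivial wedge of circles. A wedge of $r\ge1$ circles is a suspension, hence has category at most two in the unreduced convention. It is not contractible, hence has category exactly two. So $\catw(B_n)=2$.

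\emph{Stable category.} From \eqref{eq:basic-chain} we get $\cats(B_n)\ge\catw(B_n)=2$. For the upper bound I would imitate Proposition~\ref{prop:stable}. Cover $K=\OO(B_n)$ by the closed stars $\operatorname{st}_K(u)$ and $\operatorname{st}_K(v)$. Every edge of $K_{2,n}$ contains $u$ or $v$, and every vertex $z_i$ lies in both stars, so these subcomplexes cover $K$. Each star is a cone, hence strongly collapsible and categorical in $K$. This gives $\scat(K)\le2$, and \eqref{eq:comparison-scat} yields $\catLS(\sd B_n)\le2$. Therefore $\cats(B_n)\le2$.

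The only step needing care is the classical computation of the category of a wedge of circles, which is standard (see \cite{CLOT}). The rest follows directly from earlier results.
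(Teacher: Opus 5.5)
Your proof is correct and follows essentially the same route as the paper. The closed stars of $u$ and $v$ give the stable upper bound. The retraction onto $\{u,v\}\cup J$ gives the ordinary category; you get it by recognizing $B_n=D_2\oplus D_n$ and invoking Proposition~\ref{prop:ordinary}, whereas the paper repeats the argument inline. A standard bound for graphs gives the weak category. The remaining differences are cosmetic: you bound $\catLS(|K_{2,n}|)$ via the suspension structure of a wedge of circles rather than the dimension bound, and you obtain $\cats(B_n)\geq2$ from $\catw\leq\cats$ rather than from noncontractibility of $B_n$ and contractibility detection under subdivision.
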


\begin{proof}
The order complex of $B_n$ is the connected graph $K_{2,n}$.  It contains a
cycle and is not contractible.  The dimension upper bound for category gives
$\catLS(|K_{2,n}|)\leq2$ \cite[Chapter~1]{CLOT}, and noncontractibility gives
the reverse inequality.
Thus $\catw(B_n)=2$.

The two subcomplexes $\operatorname{st}(u)$ and
$\operatorname{st}(v)$ cover $\OO(B_n)$ and are cones.  Therefore
$\scat(\OO(B_n))\leq2$, and \eqref{eq:comparison-scat} gives
$\cats(B_n)\leq2$.  The space $B_n$ has no beat points and is
noncontractible, so its stable category is not one.  Hence $\cats(B_n)=2$.

The $n$ principal opens $U_{z_i}=\{u,v,z_i\}$ are contractible and cover
$B_n$.  Conversely, suppose a categorical open subset contains two or more
maxima indexed by $J$.  It contains the open subspace
\[
  B_J=\{u,v\}\cup\{z_j:j\in J\}.
\]
The subspace $B_J$ has no beat points.  Sending all maxima outside $J$ to a
fixed $z_j$ defines a retraction $B_n\to B_J$, so the inclusion
$B_J\hookrightarrow B_n$ is not nullhomotopic.  Thus a categorical open set
contains at most one maximum, and $\catLS(B_n)=n$.
\end{proof}

Let $D_q$ denote a discrete space of $q$ points, with $D_0$ interpreted as
the empty summand.  For $q\geq1$ its category triple is $(q,q,q)$.

\begin{repeatedtheorem}{thm:realization}
\RealizationTheoremStatement
\end{repeatedtheorem}

\begin{proof}[Proof of Theorem~\ref{thm:realization}]
First suppose that $1\leq a<b\leq2a$ and $c\geq b$.  Put
\[
  d=b-a,\qquad q=2a-b,\qquad e=c-b.
\]
Then $d\geq1$ and $q,e\geq0$.  Consider
\begin{equation}\label{eq:realizing-space}
  Y=D_q\sq X_{e+2}\sq
    \bigsqcup_{j=1}^{d-1}X_2.
\end{equation}
By Corollary~\ref{cor:subdivision-profile} and Proposition~\ref{prop:additivity},
\begin{align*}
 \catw(Y)&=q+1+(d-1)=a,\\
 \cats(Y)&=q+2+2(d-1)=b,\\
 \catLS(Y)&=q+(e+2)+2(d-1)=c.
\end{align*}
The construction uses $c+9(b-a)$ points.

Now suppose $2\leq a\leq c$.  Since $c-a+2\geq2$, the space
\[
  D_{a-2}\sq B_{c-a+2}
\]
has category triple $(a,a,c)$ by Propositions~\ref{prop:additivity} and
\ref{prop:Bn}, and has $c+2$ points.  A one-point space realizes $(1,1,1)$.

Finally, if $\cats(X)=1$, then some subdivision of $X$ is contractible.
Contractibility detection under subdivision implies that $X$ itself is
contractible.  Hence $\catLS(X)=1$, proving that $(1,1,c)$ forces $c=1$.
\end{proof}

\begin{remark}
Allowing the empty sum, the additive monoid generated by the atoms
\[
  (1,1,1),\qquad (1,2,m)\quad(m\geq2)
\]
is exactly
\[
 \{(0,0,0)\}\cup\{(a,a,a):a\geq1\}\cup
 \{(a,b,c):1\leq a<b\leq2a,\ c\geq b\}.
\]
Indeed, the sums containing at least one atom $(1,2,m)$ give precisely the
strict region in the last set, while sums using only $(1,1,1)$ lie on the
displayed diagonal ray.  The family $B_n$ supplies the additional diagonal
atoms $(2,2,n)$ used in Theorem~\ref{thm:realization}.
\end{remark}

\section{Further questions}\label{sec:questions}

Our results lead to several natural problems.

\begin{conjecture}[Minimality of the strict example]
\label{conj:minimality}
Every finite $T_0$-space $X$ satisfying
$\catw(X)<\cats(X)<\catLS(X)$ has at least twelve points.  Equivalently,
$X_3$ has minimum cardinality among finite $T_0$-spaces on which both
inequalities are strict.
\end{conjecture}

Corollary~\ref{cor:subdivision-profile} gives an upper bound of twelve.  The classification of
Cianci and Ottina \cite{CianciOttina2020} shows that a weakly contractible,
noncontractible finite space has at least nine points, but this alone does not
settle the strict-separation minimum: a hypothetical smaller example need not
have weak category one or arise from our construction.  A proof of Conjecture~\ref{conj:minimality} must therefore also exclude
strict examples with $\catw(X)\geq2$.

\begin{question}
Which triples $(a,b,c)$ with $1\leq a\leq b\leq c$ are realized by connected
finite $T_0$-spaces?
\end{question}

Our additive constructions are generally disconnected.  The spaces $X_m$
show that connectedness imposes no upper bound on the ordinary category when
$(a,b)=(1,2)$.  It remains to find connected analogues for the wider region
of Theorem~\ref{thm:realization}.

\begin{question}
Does every finite $T_0$-space satisfy
$\cats(X)\leq2\catw(X)$?
\end{question}

The spaces $X_m$ attain equality.  An affirmative answer, together with
Theorem~\ref{thm:realization}, would determine the full set $\mathfrak R$:
it would consist exactly of the regions in that theorem.  A negative answer
would produce triples outside the range of the present additive construction.

\section*{Acknowledgements}
ChatGPT (OpenAI) was used to assist with language editing and presentation.

\end{document}